\newtheorem{theorem}{Theorem}
\theoremstyle{definition}
\newcommand{\con}{\operatorname{const.}}
\newcommand{\fix}{\operatorname{fix}}
\newcommand{\isom}{\operatorname{Isom}^+(\HH^{n})}
\newcommand{\mob}{\operatorname{M\ddot{o}b}}
\newcommand{\bb}{\mathscr{B}}
\newcommand{\bd}{\partial}
\newcommand{\es}{\emptyset}
\newcommand{\ov}{\overline}
\newcommand{\sm}{\smallsetminus}
\newcommand{\ve}{\varepsilon}
\newcommand{\vs}{\vspace{2mm}}
\newcommand{\RR}{{\mathbb R}}
\newcommand{\HH}{{\mathbb H}}
\newcommand{\NN}{{\mathbb N}}
\newcommand{\bit}{\it \bfseries}
\newcommand{\thmref}[1]{Theorem~\ref{#1}}
\newcommand{\figref}[1]{Fig.~\ref{#1}}
\begin{document}

\title[A discreteness criterion]{A discreteness criterion for groups containing parabolic isometries}

\author[V. Erlandsson and S. Zakeri]{Viveka Erlandsson and Saeed Zakeri}

\address{V. Erlandsson, Department of Mathematics, Graduate Center of
CUNY, New York}

\email{verlandsson@gc.cuny.com}

\address{S. Zakeri, Department of Mathematics, Queens College and Graduate Center of
CUNY, New York}

\email{saeed.zakeri@qc.cuny.edu}

\subjclass[2010]{22E40, 30F40}

\keywords{}

\date{July 23, 2013}

\begin{abstract}
This note will prove a discreteness criterion for groups of orientation-preserving isometries of the hyperbolic space which contain a parabolic element. It can be viewed as a generalization of the well-known results of Shimizu-Leutbecher and J{\o}rgensen in dimensions $2$ and $3$, and is closely related to Waterman's inequality in higher dimensions. Unlike his algebraic method, the argument presented here is geometric and yields an improved asymptotic bound. 
\end{abstract}

\maketitle

\section{Preliminaries}\label{sec:pre}

We will need a few basic facts about the hyperbolic space and its isometries, as well as the notion of the Margulis region associated with parabolic fixed points. Most of this material is standard and can be found, for example, in \cite{BP}, \cite{R}, and \cite{Th}. \vs

Throughout we will use the upper half-space model for the $n$-dimensional real hyperbolic space:
$$
\HH^{n} = \{ x = (v,t) : v \in \RR^{n-1}, t>0 \}.
$$
The extended boundary $\bd \HH^{n} \cong \ov{\RR} \, \! ^{n-1} =\RR^{n-1} \cup \{ \infty \}$ is homeomorphic to the sphere of dimension $n-1$, with the closure $\ov{\HH} \, \! ^{n}= \HH^{n} \cup \ov{\RR} \, \! ^{n-1}$ homeomorphic to the closed $n$-ball. The hyperbolic metric $(dv^2+dt^2)/t^2$ on $\HH^{n}$ induces the distance $\rho(\cdot,\cdot)$ which satisfies
\begin{equation}\label{dist}
\cosh(\rho(x,\hat{x}))=1+\frac{\| x-\hat{x} \|^2}{2 t \hat{t}} \qquad (x,\hat{x} \in \HH^{n}).
\end{equation}
Here $\| \cdot \|$ is the Euclidean norm in $\RR^{n}$ and $t,\hat{t}$ are the heights of $x,\hat{x}$. \vs

We denote by $\isom$ the group of orientation-preserving isometries of $\HH^{n}$ with respect to hyperbolic metric. Every element of $\isom$ extends continuously to a M\"{o}bius map acting on $\ov{\RR} \, \! ^{n-1}$. Conversely, the Poincar\'e extension of every M\"{o}bius map of $\ov{\RR} \, \! ^{n-1}$ is an element of $\isom$. It follows that $\isom$ is canonically isomorphic to the group $\mob(n-1)$ of orientation-preserving M\"{o}bius maps acting on the $(n-1)$-sphere. For each $g \in \isom$, the fixed point set $\fix(g)=\{ x \in \ov{\HH} \, \! ^{n} : g(x)=x \}$ is non-empty. A non-identity $g$ is {\bit elliptic} if $\fix(g)$ intersects $\HH^{n}$, {\bit loxodromic} if $\fix(g)$ consists of two distinct points on $\ov{\RR} \, \! ^{n-1}$, and {\bit parabolic} if $\fix(g)$ consists of a unique point on $\ov{\RR} \, \! ^{n-1}$. In the last case, one can always assume $\fix(g)= \{ \infty \}$ after a conjugation. The parabolic isometry $g$ will then have the normal form
\begin{equation}\label{normalform1}
g: (v,t) \mapsto (Av+a,t) \qquad (v \in \RR^{n-1},t>0),
\end{equation}
where $A \in \text{SO}(n-1)$ and $a \in \RR^{n-1}$ is non-zero. The conjugacy class of $A$ in $\text{SO}(n-1)$ is uniquely determined by $g$. We call $g$ a {\bit pure translation} if $A=I$, and a {\bit screw translation} if $A \neq I$. A screw translation is {\bit rational} if $A$ has finite order, and is {\bit irrational} otherwise. When $n=4$, these correspond to the cases where the angle of rotation of $A \in \text{SO}(3)$ about its axis is a rational or irrational multiple of $2\pi$. Note that screw translations can exist only when $n \geq 4$. This is because for every non-identity $A \in \text{SO}(2)$ the matrix $A-I$ is invertible, so the map $v \mapsto Av+a$ has a finite fixed point at $-(A-I)^{-1}a$. \vs

Let $\Gamma \subset \isom$ be a discrete group containing a parabolic element which fixes $\infty$. Denote by $\Gamma_{\infty}$ the stabilizer subgroup $\{ g \in \Gamma: g(\infty)=\infty \}$. Since $\Gamma_{\infty}$ is discrete, it cannot contain loxodromic elements \cite[Lemma D.3.6]{BP}, so every element of $\Gamma_{\infty}$ is either parabolic or a finite-order elliptic. For a fixed $\ve>0$, define the {\bit Margulis region} associated with $\Gamma_{\infty}$ by
$$
T = \{ x \in \HH^{n} : \rho(g(x),x)<\ve \ \text{for some parabolic} \ g \in \Gamma_{\infty} \}.
$$
It is easy to see that $T$ is a non-empty domain having $\infty$ on its boundary. Furthermore, when $\ve$ is small enough (less than the Margulis constant of $\HH^{n}$), $T$ is {\bit precisely invariant} under the action of $\Gamma_{\infty}$ in sense that  
$$
\begin{cases}
h(T)=T & \qquad \text{if} \ \ h \in \Gamma_\infty \\
h(T) \cap T =\es & \qquad \text{if} \ \ h \in \Gamma \sm \Gamma_\infty
\end{cases}
$$
(see \cite[Lemma 2.3]{EZ}). The explicit description of $T$ is quite intricate and depends both on the algebraic structure of $\Gamma_{\infty}$ and the Diophantine properties of the corresponding rotation subgroup of $\text{SO}(n-1)$. Fortunately, it will be sufficient for our purposes to work with a subdomain of $T$ which is easier to describe. Take a parabolic element $g \in \Gamma_{\infty}$ and consider the domain
$$
T_g = \{ x \in \HH^{n} : \rho(g^i(x),x)<\ve \ \text{for some} \ i \in \NN \}.
$$
Clearly, $T_g \subset T$ so we still have the property 
\begin{equation}\label{disj}
h(T_g) \cap T_g = \es \qquad \text{if} \ h \in \Gamma \sm \Gamma_\infty.
\end{equation}
By the distance formula \eqref{dist}, the condition $\rho(g^i(x),x)<\ve$ on $x=(v,t) \in \HH^{n}$ is equivalent to  
$$
\frac{\| g^i(v)-v \|^2}{2t^2} < \cosh(\ve)-1
$$
or
$$
t> c(\ve) \ \| g^i(v)-v \|, \quad \text{where} \quad c(\ve) =\frac{1}{\sqrt{2\cosh(\ve)-2}}. 
$$
If we define the sequence of functions $\{ u_{g,i}: \RR^{n-1} \to \RR \}_{i \in \NN}$ by
\begin{equation}\label{ui}
u_{g,i}(v)= c(\ve) \ \| g^i(v)-v \| 
\end{equation}
and the {\bit boundary function} $\bb_g : \RR^{n-1} \to \RR$ by
\begin{equation}\label{bb}
\bb_g(v)= \inf_{i \in \NN} \, u_{g,i}(v),
\end{equation}
it follows that
\begin{equation}\label{tg}
T_g = \{ (v,t) \in \HH^{n} :  t > \bb_g(v) \}.
\end{equation}

The geometry of $T_g$ is of interest to us only when $g$ is an irrational screw translation since otherwise $\bb_g$ is a bounded function and $T_g$ contains a precisely invariant horoball based at $\infty$. To see this, consider the normal form \eqref{normalform1} for $g$. After conjugating by a translation, we may assume $Aa=a$. Consider the $A$-invariant orthogonal decomposition 
$$
\RR^{n-1} = E \oplus E^\perp,
$$
where $E=\{ v \in \RR^{n-1} : Av=v \}$. Splitting $v \in \RR^{n-1}$ accordingly as $v=w+w^\perp$, we see that 
$$
g^i(v)-v = A^i v+ i a - v = (A^i-I) w^\perp + ia,
$$
hence
$$
\| g^i(v)-v \|^2 = \|(A^i-I)w^\perp \|^2 + i^2 \| a \|^2
$$
since $ia \in E$ is orthogonal to $(A^i-I) w^\perp \in E^\perp$. It follows that 
\begin{equation}\label{pyth}
u_{g,i}(v)= c(\ve) \ \left( \|(A^i-I)w^\perp \|^2 + i^2 \| a \|^2 \right)^{1/2}.
\end{equation}
Now if $g$ is a pure or rational screw translation, \eqref{bb} and \eqref{pyth} imply that 
\begin{equation}\label{prat}
\bb_g(v) \leq c(\ve) \, i \| a \|, \quad \text{where} \ i \in \NN \  \text{is the order of} \ A 
\end{equation}
(equality holds for all $v$ if $i=1$, and for all $v$ with large $\| w^{\perp} \|$ if $i>1$). In this case, the description \eqref{tg} shows that $T_g$, hence the Margulis region $T$, contains the horoball 
$$
\{ (v,t) \in \HH^{n}: t > c(\ve) \, i \| a \| \}  
$$
based at $\infty$, which is easily seen to be precisely invariant under the action of $\Gamma_\infty$. 

\section{The Criterion}\label{sec:th}

We will use the geometry of the Margulis region to prove a discreteness criterion for subgroups of $\isom$ containing parabolic isometries. Our result can be viewed as a generalization of the well-known results of Shimizu-Leutbecher \cite{Sh} and J{\o}rgensen \cite{J} in dimensions $2$ and $3$ which give a uniform bound on the radii of the isometric spheres of elements in a discrete group which move the parabolic fixed point $\infty$. In higher dimensions, Ohtake \cite{Oh} has shown that no such uniform bound exists in the presence of an irrational screw translation. In \cite{W}, Waterman obtains a bound on the radii of the isometric spheres as a function of the Euclidean distance that their centers move under the parabolic isometry (see \eqref{Wat}). Our result similarly bounds the radii in terms of the centers of the isometric spheres. \vs

Let us first recall some basic facts about isometric spheres. Consider a map $h \in \mob(n-1)$ with $h(\infty) \neq \infty$, and let $v=h^{-1}(\infty)$, $v'=h(\infty)$. It is not hard to check that $h$ maps each Euclidean sphere $S(v,R)= \{ w \in \RR^{n-1} : \| w-v \| = R \}$ to a Euclidean sphere $S(v',R')$. Here $R \mapsto R'$ is a decreasing function, with $\lim_{R \to 0} R'=\infty$ and $\lim_{R \to \infty} R'=0$. Hence there is a unique radius $R_h>0$ such that $h(S(v,R_h))=S(v',R_h)$. We call $S(v,R_h)$ the {\bit isometric sphere} of $h$ and denote it by $S_h$. The map $h: S_h \to S_{h^{-1}}$ is a Euclidean isometry, and $R_h=R_{h^{-1}}$. Moreover, $h$ maps the interior (resp. exterior) of $S_h$ to the exterior (resp. interior) of $S_{h^{-1}}$. \vs

If $h \in \isom$ with $h(\infty) \neq \infty$, we consider its boundary map $\hat{h} \in \mob(n-1)$ and define the isometric sphere $S_h$ as the convex hull of the isometric sphere of $\hat{h}$, that is, the upper hemisphere in $\HH^{n}$ with the same center and radius as those of $S_{\hat{h}}$. The center of $S_h$ lies on $\RR^{n-1}$, so it has coordinates $(v_h,0) \in \RR^{n-1} \times \{ 0 \}$. The north pole $N_h$ of $S_h$ has coordinates $(v_h,R_h)$. Since $h(v_h,0)=\infty$ and $h(\infty)=(v_{h^{-1}},0)$, $h$ carries the vertical geodesic through $(v_h,0)$ isometrically to the vertical geodesic through $(v_{h^{-1}},0)$. It follows in particular that $h(N_h)=N_{h^{-1}}$.

\begin{theorem}[Discreteness Criterion]\label{ds}
Suppose $g,h \in \isom$, $g$ is parabolic with $g(\infty)=\infty$, and $h(\infty) \neq \infty$. If the group generated by $g,h$ is discrete, then
\begin{equation}\label{bbb}
R_h \leq \big( \bb_g(v_h) \ \bb_g(v_{h^{-1}}) \big) ^{1/2}.
\end{equation}
\end{theorem}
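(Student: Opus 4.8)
The plan is to exploit the disjointness relation \eqref{disj} applied to a pair of points lying on one well-chosen vertical geodesic, rather than comparing the two north poles directly (the latter would only give the weaker bound $R_h \le \max(\bb_g(v_h),\bb_g(v_{h^{-1}}))$, since $h(N_h)=N_{h^{-1}}$ and $N_h$, $N_{h^{-1}}$ both sit at height $R_h$). Set $\Gamma=\langle g,h\rangle$. Since $h(\infty)\ne\infty$ we have $h\in\Gamma\sm\Gamma_\infty$, while $g\in\Gamma_\infty$ is parabolic, so \eqref{disj} yields $h(T_g)\cap T_g=\es$. Using the graph description \eqref{tg}, namely $T_g=\{(v,t):t>\bb_g(v)\}$, I will argue by contradiction: assuming $R_h^2 > \bb_g(v_h)\,\bb_g(v_{h^{-1}})$, I will exhibit a point of $h(T_g)\cap T_g$.

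The key step is to compute how $h$ acts along the vertical geodesic $\gamma$ through the center $(v_h,0)$ of $S_h$. Because $(v_h,0)=h^{-1}(\infty)$ and $h(\infty)=(v_{h^{-1}},0)$, the isometry $h$ carries $\gamma$ onto the vertical geodesic $\gamma'$ through $(v_{h^{-1}},0)$, sending the finite endpoint of $\gamma$ to $\infty$ and $\infty$ to the finite endpoint of $\gamma'$, and therefore reversing the upward orientation. Since $h(N_h)=N_{h^{-1}}$, it sends height $R_h$ on $\gamma$ to height $R_h$ on $\gamma'$. Parametrizing each vertical geodesic by its hyperbolic coordinate $\log t$, the restriction of $h$ becomes an orientation-reversing isometry of $\RR$ fixing $\log R_h$, namely $\log t \mapsto 2\log R_h-\log t$. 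This gives the explicit formula
\begin{equation*}
h(v_h,s)=\Big(v_{h^{-1}},\,\frac{R_h^2}{s}\Big) \qquad (s>0).
\end{equation*}

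With this formula the conclusion is immediate. The point $(v_h,s)$ lies in $T_g$ exactly when $s>\bb_g(v_h)$, while its image $h(v_h,s)=(v_{h^{-1}},R_h^2/s)$ lies in $T_g$ exactly when $R_h^2/s>\bb_g(v_{h^{-1}})$, i.e. $s<R_h^2/\bb_g(v_{h^{-1}})$. (Here $\bb_g\ge c(\ve)\|a\|>0$ by \eqref{pyth}, so the division is harmless.) If $R_h^2>\bb_g(v_h)\,\bb_g(v_{h^{-1}})$, the open interval $\big(\bb_g(v_h),\,R_h^2/\bb_g(v_{h^{-1}})\big)$ is nonempty, and any $s$ in it produces a point $(v_h,s)\in T_g$ whose image under $h$ again lies in $T_g$; that image then belongs to $h(T_g)\cap T_g$, contradicting \eqref{disj}. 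Hence $R_h^2\le\bb_g(v_h)\,\bb_g(v_{h^{-1}})$, which is \eqref{bbb}.

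I expect the main obstacle to be the second step: verifying rigorously that $h$ restricts to the inversion $s\mapsto R_h^2/s$ along these geodesics. Both the orientation reversal and the fixed height $R_h$ must be pinned down, since the geometric-mean strength of the bound rests entirely on this inversion structure; once it is in hand, the remainder follows in one line from the graph description \eqref{tg} and the precise invariance \eqref{disj}.
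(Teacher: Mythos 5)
Your proof is correct and is essentially the paper's own argument: the paper likewise invokes the precise invariance \eqref{disj} of $T_g$ and the inversion structure of $h$ along the vertical geodesic through $(v_h,0)$, encoded there by comparing the hyperbolic length of the segment $\sigma$ from $(v_h,\bb_g(v_h))$ up to $N_h$ with that of its image (yielding $R_h^2=\bb_g(v_h)\,t$ with $t<\bb_g(v_{h^{-1}})$). The only cosmetic differences are that you make the formula $s\mapsto R_h^2/s$ explicit and run the argument by contradiction, while the paper argues directly.
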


\begin{proof}
For convenience we drop the subscript $h$ from our notations and denote the quantities associated with $h^{-1}$ with a minus sign. There is nothing to prove if $R \leq \bb_g(v)$ and $R \leq \bb_g(v^-)$, so let us assume that $R>\bb_g(v)$ (the case $R>\bb_g(v^-)$ is similar). The vertical geodesic segment $\sigma$ from the north pole $N=(v,R)$ down to $(v,\bb_g(v))$ lies in $T_g$ and has hyperbolic length $\log(R/\bb_g(v))$ (see \figref{RAD}). The image $h(\sigma)$ is the vertical geodesic segment from $N^-=(v^-,R)$ up to $(v^-,t)$ for some $t$, and has the same length as $\sigma$. It follows that $\log(t/R)=\log(R/\bb_g(v))$, or $R^2=\bb_g(v) t$. Let $\Gamma$ be the group generated by $g,h$, and $\Gamma_\infty$ be the stabilizer of $\infty$ in $\Gamma$. Since $h \in \Gamma \sm \Gamma_{\infty}$, we have $h(T_g) \cap T_g = \es$ by \eqref{disj}. In particular, the image $h(\sigma)$ must be disjoint from $T_g$. This shows $t<\bb_g(v^-)$, and proves the required inequality $R^2 <\bb_g(v) \bb_g(v^-)$. 
\end{proof}

\begin{figure*}
  \includegraphics[width=11cm]{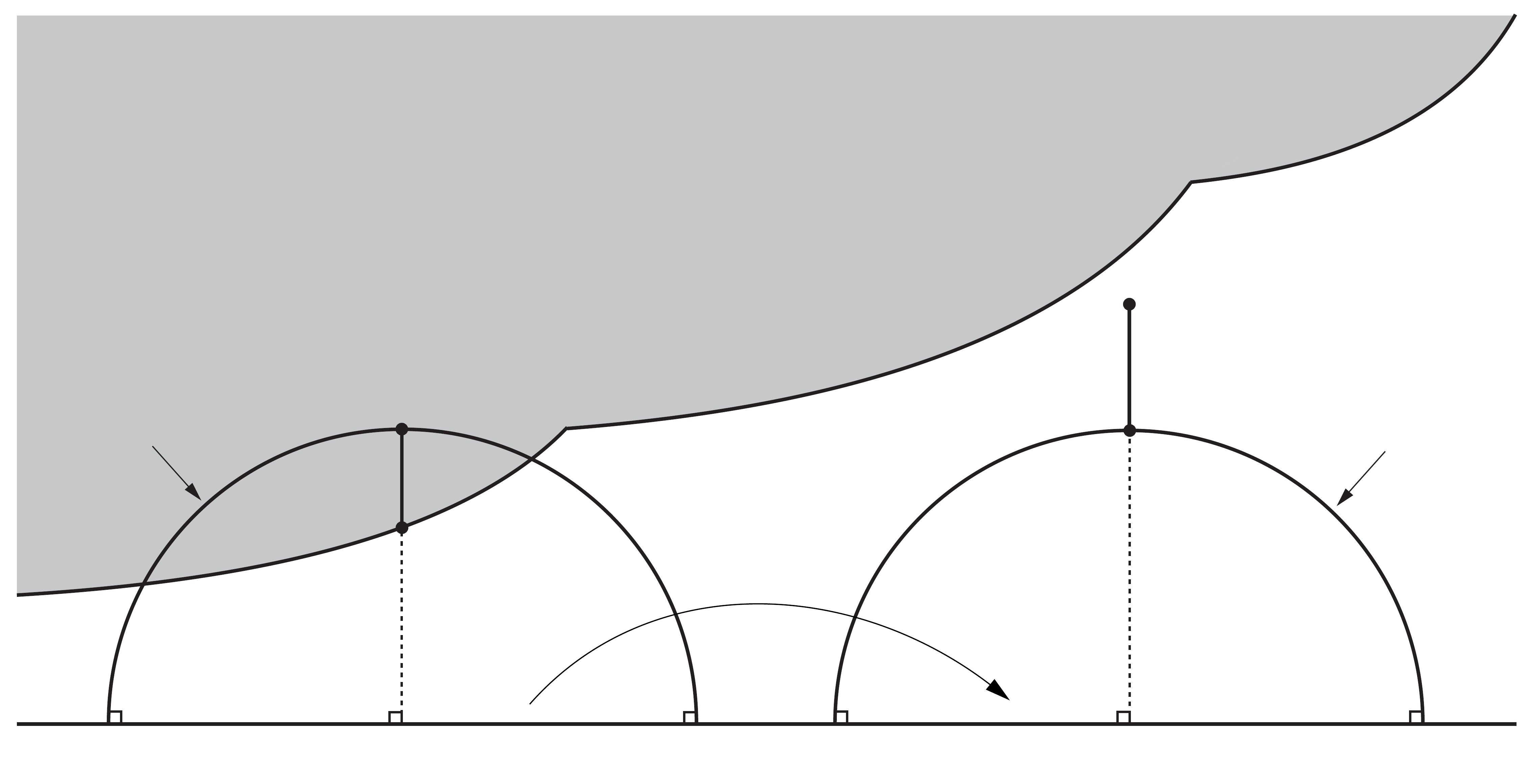}
\caption{{\sl Proof of \thmref{ds}}
\at{-7.0\pscm}{3.4\pscm}{{\footnotesize $S_h$}}
\at{2.2\pscm}{3.4\pscm}{{\footnotesize $S_{h^{-1}}$}}
\at{-2.5\pscm}{5.0\pscm}{{\small $T_g$}}
\at{-5.6\pscm}{1.0\pscm}{{\footnotesize $(v,0)$}}
\at{-5.3\pscm}{2.5\pscm}{{\footnotesize $(v,\bb_g(v))$}}
\at{-5.75\pscm}{3.0\pscm}{{\footnotesize $\sigma$}}
\at{-5.7\pscm}{3.6\pscm}{{\footnotesize $N$}}
\at{-1.0\pscm}{1.0\pscm}{{\footnotesize $(v^-,0)$}}
\at{-1.4\pscm}{3.8\pscm}{{\footnotesize $h(\sigma)$}}
\at{-0.75\pscm}{3.1\pscm}{{\footnotesize $N^-$}}
\at{-0.8\pscm}{4.3\pscm}{{\footnotesize $(v^-,t)$}}
\at{-3.8\pscm}{2.3\pscm}{{\footnotesize $h$}}
}
\label{RAD} 
\end{figure*}

A few comments on \thmref{ds} are in order. First, suppose $g:(v,t) \mapsto (Av+a,t)$ is a pure or rational screw translation, with $A^i=I$, so  \eqref{prat} holds. Then \eqref{bbb} yields the uniform bound 
$$
R_h \leq c(\ve) \, i \| a \|. 
$$
This is off by a factor of $c(\ve)$ compared to the optimal inequality of Shimizu-Leutbecher in dimension $2$, generalized by Waterman to higher dimensions. For example, let $g$ be the unit translation $(v,t) \mapsto (v+1,t)$ in $\HH^2$ and suppose $g,h$ generate a torsion-free Fuchsian group. The optimal Margulis constant in this case is known to be $\sinh^{-1}(1)$ \cite{Y}, so the above inequality reduces to  
$$
R_h \leq c(\ve) \leq \frac{1}{\sqrt{2 \cosh(\sinh^{-1}(1))-2}} = \frac{1}{\sqrt{2(\sqrt{2}-1)}} \approx 1.0986, 
$$ 
which should be contrasted with the optimal inequality $R_h \leq 1$. \vs

The real advantage of \eqref{bbb} becomes apparent when $g$ is an irrational screw translation. Waterman's result in $\HH^{n}$ \cite[Theorem 8]{W} is the inequality
\begin{equation}\label{Wat}
R_h \leq K_g \ \| g(v_h)-v_h \|^{1/2} \ \| g(v_{h^{-1}})-v_{h^{-1}} \|^{1/2},
\end{equation}
where $K_g \in [1,2]$ is an explicit constant depending on the rotational part $A$ of $g$. He derives this via a purely algebraic argument based on Clifford matrix representation of M\"{o}bius maps, and assuming $A$ is close enough to the identity. Ignoring this restriction for a moment, replacing $g$ by the iterate $g^i$ in \eqref{Wat}, and using the definition of the $u_{g,i}$ in \eqref{ui}, one obtains
\begin{equation}\label{Wat1}
R_h \leq \frac{2}{c(\ve)} \ \big( u_{g,i}(v_h) \ u_{g,i}(v_{h^{-1}}) \big) ^{1/2} \qquad (i \in \NN).
\end{equation}
Our geometric approach in \thmref{ds}, in effect, replaces each factor on the right side of this inequality with its infimum over $i \in \NN$. Up to a constant, this would be the improved bound \eqref{bbb}. \vs

To get a sense of asymptotics in these inequalities, let us express them in terms of how far the centers $v_h,v_{h^{-1}}$ are from the subspace $E$ on which $g: (v,t) \mapsto (Av+a,t)$ acts as a translation. Take, as before, the decomposition $v=w+w^{\perp}$, so $\| w^\perp \|$ is the Euclidean distance of $v$ to $E$. By \eqref{pyth}, $u_{g,i}(v)$ depends only on $w^{\perp}$. Hence, the quantity  
$$
\tilde{u}_{g,i}(r) = \sup_{\| w^{\perp} \| =r} u_{g,i}(v) 
$$
is well-defined and given by 
$$
\tilde{u}_{g,i}(r) = c(\ve) \ \left( \| A^i-I \|^2 r^2 + i^2 \| a \|^2 \right) ^{1/2}.
$$
In particular,
\begin{equation}\label{YOO}
\tilde{u}_{g,i}(r) = O(r) \qquad \text{as} \ r \to \infty. 
\end{equation}
If we set 
$$
\tilde{\bb}_g(r) = \inf_{i \in \NN} \, \tilde{u}_{g,i}(r),
$$
then $\bb_g(v) \leq \tilde{\bb}_g(r)$ whenever $\| w^{\perp} \| =r$. Since 
$$
\frac{\tilde{\bb}_g(r)}{r} \leq \frac{\tilde{u}_{g,i}(r)}{r} = c(\ve) \, \left( \| A^i-I \|^2 + \frac{i^2 \| a \|^2}{r^2} \right)^ {1/2}, 
$$ 
it follows that 
$$
\limsup_{r \to \infty} \frac{\tilde{\bb}_g(r)}{r} \leq c(\ve) \ \| A^i-I \| \qquad \text{for every} \ i \in \NN. 
$$
By compactness of $\text{SO}(n-1)$, there is a sequence $i_k \to \infty$ for which $A^{i_k} \to I$ as $k \to \infty$. This proves 
\begin{equation}\label{BOO}
\tilde{\bb}_g(r) = o(r) \qquad \text{as} \ r \to \infty. 
\end{equation}
Thus, if $r_h=\|w_h^{\perp} \|$ denotes the distance of $v_h$ to the subspace $E$, Waterman's inequality \eqref{Wat1} together with \eqref{YOO} lead to the bound
\begin{equation}\label{Wat2}
R_h = O(\sqrt{r_h \, r_{h^{-1}}}), 
\end{equation}
while \eqref{bbb} together with \eqref{BOO} give the sharper bound  
\begin{equation}\label{lo}
R_h = o(\sqrt{r_h \, r_{h^{-1}}}) 
\end{equation}
as $r_h,r_{h^{-1}} \to \infty$. 

\section{The $4$-dimensional case}\label{sec:fr}

The preceding analysis takes a much simpler form in $\HH^4$, the space of lowest dimension in which screw translations exist. Let $\Gamma$ be a discrete subgroup of $\text{Isom}^+(\HH^4)$ containing a parabolic isometry which fixes $\infty$. As noted above, the only interesting case for us is when the stabilizer $\Gamma_\infty$ contains no pure or rational screw translations. In this case, if $\Gamma_{\infty}$ is torsion-free, it must be infinite cyclic \cite{EZ}. \vs

We will use the cylindrical coordinates $v=(r,\theta,z)$ on the boundary of $\HH^4$. Pick an irrational screw translation $g$ in $\Gamma_{\infty}$. After a conjugation, we can put $g$ in the normal form 
\begin{equation}\label{gee}
g(r,\theta,z,t)=(r, \theta+2\pi\alpha, z+1,t)
\end{equation}
with $\alpha$ irrational. Thus, the rotational part $A \in \text{SO}(3)$ of $g$ keeps the $z$-axis invariant and rotates in the $(r,\theta)$-plane by an angle $2\pi \alpha$. Note that for each $i \in \NN$ the restriction of $A^i-I$ to the $(r,\theta)$-plane is a conformal linear map, so the functions $u_{g,i}$ and hence the boundary function $\bb_g$ depend only on $r$. It follows that in this case, $\tilde{u}_{g,i}=u_{g,i}$ and $\tilde{\bb}_g=\bb_g$. \vs  

It is easy to see that $r \mapsto \bb_g(r)$ is strictly increasing and piecewise smooth. 
The asymptotic behavior of $\bb_g(r)$ as $r \to \infty$ turns out to depend on the arithmetical properties of $\alpha$ determined by its continued fraction expansion (see \cite{Su} and \cite{EZ}). In a recent work, we have investigated this dependence to prove the following result: 

\begin{theorem}[\cite{EZ}]\label{asym}   
For every irrational $\alpha$, the boundary function $\bb_g$ associated with the parabolic isometry \eqref{gee} satisfies the asymptotically universal upper bound
$$
\bb_g(r) \leq \con \, r^{1/2} \qquad \text{for large} \ r.  
$$ 
If $\alpha$ is Diophantine of exponent $\nu \geq 2$, then $\bb_g$ satisfies the lower bound
$$
\bb_g(r) \geq \con  \, r^{1/(2\nu-2)} \qquad \text{for large} \ r. 
$$
\end{theorem}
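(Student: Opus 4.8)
The plan is to turn the geometric quantity $\bb_g$ into an explicit extremal problem in Diophantine approximation, and then to settle the upper bound by Dirichlet's theorem and the lower bound by the defining inequality of a Diophantine number. First I would record the closed form of $\bb_g$. In the normal form \eqref{gee} one has $\|a\|=1$, while $A^i$ rotates the $(r,\theta)$-plane by the angle $2\pi i\alpha$ and fixes the $z$-axis, so $A^i-I$ is conformal on that plane with operator norm $\|A^i-I\|=2|\sin(\pi i\alpha)|$. Substituting into \eqref{pyth} and using $\tilde{\bb}_g=\bb_g$ in this case gives
$$
\bb_g(r)=c(\ve)\,\inf_{i\in\NN}\big(4r^2\sin^2(\pi i\alpha)+i^2\big)^{1/2}.
$$
Writing $d_i=\dist(i\alpha,\ZZ)$ and using the elementary comparison $2d_i\le|\sin(\pi i\alpha)|\le\pi d_i$, the problem reduces, up to fixed multiplicative constants, to estimating
$$
F(r)=\inf_{i\in\NN}\big(r^2 d_i^2+i^2\big),
$$
since then $\bb_g(r)\asymp F(r)^{1/2}$.

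For the universal upper bound I would balance the two competing terms $r^2 d_i^2$ and $i^2$ using Dirichlet's approximation theorem: for every $N\ge1$ there is an integer $i$ with $1\le i\le N$ and $d_i\le 1/N$. Choosing $N=\lceil\sqrt r\,\rceil$ produces an index with $i\lesssim\sqrt r$ and $d_i\lesssim1/\sqrt r$ simultaneously, whence $r^2 d_i^2\lesssim r$ and $i^2\lesssim r$, so $F(r)=O(r)$ and $\bb_g(r)\le\con\,r^{1/2}$ for large $r$. No arithmetic hypothesis on $\alpha$ enters, which is why this bound is asymptotically universal.

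For the lower bound I would invoke the hypothesis that $\alpha$ is Diophantine of exponent $\nu$, i.e.\ $d_i\ge C\,i^{-(\nu-1)}$ for all $i$ and some $C>0$. A threshold argument avoids having to locate the optimal index: if $F(r)<M$, then some $i$ satisfies both $i^2<M$ and $r^2 d_i^2<M$, hence $i<\sqrt M$ and $d_i<\sqrt M/r$. Feeding these into the Diophantine inequality yields $C\,M^{-(\nu-1)/2}\le C\,i^{-(\nu-1)}\le d_i<\sqrt M/r$, which rearranges to $M>(Cr)^{2/\nu}$. Contrapositively $F(r)\ge(Cr)^{2/\nu}$, so that $\bb_g(r)\ge\con\,r^{1/\nu}$ for large $r$. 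Since $\tfrac1\nu\ge\tfrac1{2\nu-2}$ for every $\nu\ge2$, this in particular gives the stated bound $\bb_g(r)\ge\con\,r^{1/(2\nu-2)}$.

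The computations here are short; the real content of the argument is conceptual. Balancing $r^2 d_i^2$ against $i^2$ shows that the size of $\bb_g(r)$ is controlled by how small $d_i$ can be made for indices $i$ of order $r^{1/\nu}$, a quantity governed by the whole sequence of continued-fraction denominators of $\alpha$ rather than by a single exponent. This is exactly why the upper estimate is universal while the lower estimate must use a Diophantine condition, and why the two exponents need not coincide when $\nu>2$: the gap between the universal exponent $\tfrac12$ and the Diophantine exponent $\tfrac1\nu$ records a genuine dependence on the fine arithmetic of $\alpha$. The only place demanding care is the passage from the infimum over $i\in\NN$ to the one-sided estimates on the individual terms; the threshold formulation is designed precisely to handle that step cleanly, so I do not expect a serious obstacle.
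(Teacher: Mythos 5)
Your proof is correct, but note first that this paper contains no proof of Theorem~\ref{asym} to compare against: the result is quoted from \cite{EZ}, and the surrounding discussion indicates that the argument there analyzes $\bb_g$ through the continued fraction expansion of $\alpha$. Your route is more elementary and self-contained. The closed form $\bb_g(r)=c(\ve)\inf_{i\in\NN}\bigl(4r^2\sin^2(\pi i\alpha)+i^2\bigr)^{1/2}$ follows correctly from \eqref{pyth}, since in the normal form \eqref{gee} one has $\|a\|=1$ and $A^i-I$ acts conformally on the rotation plane with norm $2|\sin(\pi i\alpha)|$; the elementary bounds $2d_i\le|\sin(\pi i\alpha)|\le\pi d_i$ then give $\bb_g(r)\asymp F(r)^{1/2}$ with constants depending only on $\ve$, so everything reduces to your extremal problem for $F$. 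The Dirichlet step with $N=\lceil\sqrt r\,\rceil$ is valid and its constants are independent of $\alpha$, which is exactly the asymptotic universality claimed; the threshold argument for the lower bound is also airtight. Two remarks. First, you actually prove $\bb_g(r)\ge\con r^{1/\nu}$, which is \emph{stronger} than the stated $\con r^{1/(2\nu-2)}$ when $\nu>2$ (they agree at $\nu=2$); since $1/\nu\ge 1/(2\nu-2)$ for $\nu\ge 2$, your bound implies the theorem, and a continued-fraction sanity check (worst case $q_{k+1}\asymp q_k^{\nu-1}$ and $r\asymp q_kq_{k+1}$, where $F(r)\asymp q_k^2\asymp r^{2/\nu}$) confirms that $1/\nu$ is in fact the sharp exponent, so there is no contradiction --- the quoted statement is simply not optimal. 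Second, this comparison rests on the convention that ``Diophantine of exponent $\nu$'' means $|\alpha-p/q|\ge Cq^{-\nu}$, equivalently $d_i\ge Ci^{-(\nu-1)}$; that is the standard definition and the one consistent with the restriction $\nu\ge 2$ in the statement, but you should make it explicit in your write-up, since the exponent $1/(2\nu-2)$ suggests \cite{EZ} may normalize the exponent differently. Finally, what the continued-fraction machinery of \cite{EZ} buys, and your argument does not, are the finer statements quoted after the theorem: the explicit universal constant (below $1000$) in the upper bound, and the arbitrarily slow growth of $\bb_g$ for Liouville $\alpha$.
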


\noindent
The upper bound is asymptotically universal in the sense that the constant involved is independent of $\alpha$. In fact, a quantitative version of this result shows that 
$$
\sup_{\alpha} \ \limsup_{r \to \infty} \frac{\bb_g(r)}{\sqrt{r}} < 1,\!000.
$$ 
On the other hand, there are irrationals $\alpha$ of Liouville type for which $\bb_g$ has arbitrarily slow growth over long intervals \cite{EZ}. \vs

The upper bound $\bb_g(r)=O(\sqrt{r})$ of \thmref{asym} is a sharper version of \eqref{BOO}. In the context of our discreteness criterion in \thmref{ds}, it yields the bound  
\begin{equation}\label{fin}
R_h = O (\sqrt[4]{r_h \, r_{h^{-1}}}) 
\end{equation}
as $r_h,r_{h^{-1}} \to \infty$, which is a sharper version of \eqref{lo}. \vs

It is not hard to construct $2$-generator groups of isometries of $\HH^4$ that are demonstrably non-discrete by \eqref{fin}, for which Waterman's criterion \eqref{Wat2} remains inconclusive: In the Cartesian coordinates $(x,y,z)$ of $\RR^3$, let $a=(r,0,0)$ for some $r>0$. Let $\gamma$ be the Euclidean isometry $(x,y,z) \mapsto (-x+2r,y,z)$, and let $\eta$ be the reflection in the sphere $S(a,r^{2/3})$. Define $h=\eta \gamma \in \mob(3)$ and extend it to an isometry of $\HH^4$. Then $h(\infty)=h^{-1}(\infty)=a$, and $h$ maps $S(a,r^{2/3})$ isometrically to itself. Hence $v_h=v_{h^{-1}}=a$, $r_h=r_{h^{-1}}=r$, and $R_h=r^{2/3}$. If $r$ is chosen sufficiently large, the group generated by $h$ and the parabolic $g$ in \eqref{gee} cannot be discrete, for otherwise \eqref{fin} would imply $R_h=O(r^{1/2})$. However, since $R_h = O(r)$, this non-discreteness is not detected by \eqref{Wat2}.

\end{document}